\theoremstyle{plain}
\newtheorem{theorem}{Theorem}
\newtheorem{lemma}[theorem]{Lemma}
\theoremstyle{definition}
\newtheorem{example}[theorem]{Example}
\theoremstyle{remark}
\newtheorem{remark}[theorem]{Remark}
\newcommand\N{\mathbb{N}}
\newcommand\R{\mathbb{R}}
\newcommand\X{{\R^d}}
\begin{document}

\pagestyle{plain}
\title{Perpetual Integral Functionals of Multidimensional Stochastic Processes  }

\date{\today}
\author{
\textbf{Yuri Kondratiev}\\
 Department of Mathematics, University of Bielefeld, \\
 D-33615 Bielefeld, Germany,\\
 Dragomanov University, Kyiv, Ukraine\\
 e-mail: kondrat@mathematik.uni-bielefeld.de\\
 Email: kondrat@math.uni-bielefeld.de
 \and \textbf{Yuliya Mishura}\\
 Taras Shevchenko National University of Kyiv\\
e-mail: myus@univ.kiev.ua
\and\textbf{ Jos{\'e} L.~da Silva},\\
 CIMA, University of Madeira, Campus da Penteada,\\
 9020-105 Funchal, Portugal.\\
e-mail: joses@staff.uma.pt}

\maketitle
\begin{abstract}
The paper is devoted to the existence of integral functionals $\int_0^\infty f(X(t))\,{\mathrm{d}t}$ for several classes of  processes in $\X$ with  $d\ge 3$.   Some examples such as Brownian motion, fractional Brownian motion, compound Poisson process,   Markov processes admitting densities of transitional probabilities     are considered.

{\em Keywords:} Perpetual integral functionals, Markov processes, fractional Brownian motion, Compound Poisson process.

{\em AMS Subject Classification 2010:} 60J25, 60J65, 60G22, 47A30.
\end{abstract}

\tableofcontents{}

\section{Introduction}
\label{sec:Introduction}
Let  $X=\{X(t), t\ge 0\}$ be a $d$-dimensional stochastic process with c\`{a}dl\`{a}g  trajectories, and let $f:\X\rightarrow \R$ be a continuous measurable function. Then for any $T>0$ the integral functional $\int_0^T f(X_t)\,{\mathrm{d}t}$ is defined. However, its properties and asymptotic behavior as $T\rightarrow \infty$ depend crucially on the properties of process $X$ and the dimension $d$. In particular, the integral functionals of one-dimensional Brownian motion (Bm for short) $B$ received a lot of attention, which for functions $f\in L_1(\R)$ yields
$$
\int_0^T f(B(t))\,{\mathrm{d}t}=\int_\R f(x)L_T(x)\,{\mathrm{d}x},
$$
where $L_T(x)$ is the local time of Bm up to moment $T$ at the point $x$. For the definitions and properties of local time of one-dimensional Bm, see, e.g., \cite{Bor, Bor1, Tak, Tak1}. Now, the asymptotic behavior of the integral functional  $\int_0^T f(X(t))\,{\mathrm{d}t}$ is very different  even for one-dimensional Markov processes and depends on their transient or recurrent properties. For example, one-dimensional   Bm $B$ is recurrent, therefore $L_\infty(x)=\infty$ for all $x\in \R$, consequently, the integral functional $\int_0^\infty f(B(t))\,{\mathrm{d}t}$, roughly speaking, does not exist. Contrary to this situation, one-dimensional Bm with positive drift, $B^\mu_t=B(t)+\mu t$, $\mu>0$  is transient therefore the perpetual integral functional  $\int_0^\infty f(B^\mu_t)\,{\mathrm{d}t}$ is finite for any nonnegative locally integrable function $f$, integrable at infinity (see \cite{Salm}). The asymptotic behavior of the integral functional $\int_0^T f(X(t))\,{\mathrm{d}t}$ and respective normalization was established in \cite{KMS}. Concerning two-dimensional processes, a lot of articles are devoted to the self-intersection local time, mainly for planar Brownian motion, see, e.g., \cite{Bass}.

Now, consider a $d$-dimensional stochastic processes $X(t)$, $t\ge 0$ with  $d>2$. As the dimension grows, the situation changes. In particular, $d$-dimensional Bm becomes transient, and it leads to the existence of the perpetual integral functional $\int_0^\infty f(B(t))\,{\mathrm{d}t}$ for a sufficiently large class of functions $f$.  However, it is reasonable to consider wider classes of stochastic processes  for which the perpetual integral functional $\int_0^\infty f(X(t))\,{\mathrm{d}t}$ exists a.s.  A natural and  simple sufficient condition for the existence is to have a finite expectation. In this connection, a natural class of function $f$ consists of bounded continuous and integrable functions.
The expectation
$$
u_f(x) = E^x\left[\int_0^\infty f(X(t))\,{\mathrm{d}t}\right]
$$
is well-known as the potential of the function $f$,   see, e.g., \cite{GB}.
The question concerning the description of admissible functions for a given
process for which the potential exists is rather open. A bit more simple situation
we have in the case of Markov processes. If $L$ is the generator of a Markov process
$X(t)$, $t\ge 0$ then $u_f$  is the solution of the following equation
$$
-Lu=f.
$$
As in the classical PDE theory we would like to write this solution in the
form
$$
u_f(x) = \int_{\X} f(y) \mu(x,{\mathrm{d}y}),
$$
where $\mu(x,{\mathrm{d}y})$ is the fundamental solution (measure)  corresponding to the operator
$L$. In the simplest cases as the Laplace operator $L=\Delta$ it is the Green function
for $\Delta$
and we will call $\mu(x,{\mathrm{d}y})$ the Green measure for the process \cite{KS}.  Of course, the notion
of a Green measure in the integral representation for the potential may be introduced
without any Markov property.  We would like to stress
that the existence and properties of Green measures are highly depending of the class
of processes under consideration. From this point of view, the perpetual integrals can be called random potentials, although the concept of potential in stochastics is used for another object.
Furthermore, the existence of the perpetual integral functional immediately implies the existence of   occupation measure of stochastic process $X$ on $[0,\infty)$ that is defined as
$$\mu_X(A, \omega)=\lambda(X^{-1}(A))=\lambda(t\in [0,\infty): X_t\in A),$$
for any Borel sets $A\in\X$, and the formula of change of measure leads to the following representation
 $$
\left(\int_0^\infty f(X(t))\,{\mathrm{d}t}\right)(\omega)= \int_\X f(y) \mu(x,{\mathrm{d}y},\omega).
$$
If we follow the "Green terminology", occupation measure can be called random Green measure.
Generally speaking, the existence of occupation measure does not mean that the integral
$\int_0^\infty f(X(t))\,{\mathrm{d}t}$ is a non-degenerate random variable, i.e., is not a constant. However, we establish that for Bm and fractional Brownian motion (fBm) this integral is not a constant whenever it exists and the function $f$ is not identical zero.  The proof is based on the property of conditional full support for the distribution of these processes.

\section{Existence and Representation of the Perpetual Integral Functional}
\label{sec:Existence-PIF}
Let    $X=\{X(t), t\ge 0\}$ be a $d$-dimensional stochastic process with c\`{a}dl\`{a}g  trajectories, and let  $f:\X\rightarrow \R$ be  a measurable and continuous function. Without loss of generality, we assume throughout the paper that the function $f$ is non-negative. From now on the following two assumptions are satisfied
\begin{equation}\label{A}
\tag{A}\int_0^\infty E[f(X(t))]\,{\mathrm{d}t}<\infty.
\end{equation}
and
\begin{equation}
\tag{B} |X(t)| \to \infty, t\to\infty,\; \mathrm{a.s.}
\end{equation}

Under condition $(A)$, according to the standard Fubini theorem, the perpetual integral functional
$\int_0^\infty  f(X_t)\,{\mathrm{d}t}$ exists with probability 1. Applying the change of variables,
we can write
$$
\int_0^\infty  f(X_t(\omega))\,{\mathrm{d}t}=\int_{\X}f(x)  \mu_X(\mathrm{d}x, \omega),
$$
where $\mu_X({\mathrm{d}x}, \omega)$ is the occupation measure (for a.a.~$\omega$)) of the process $X$ on $[0,\infty)$ defined as
 $$\mu_X(A, \omega)=\lambda(X^{-1}(A))=\lambda(t\in [0,\infty): X_t\in A),$$ 
for every Borel set  $A\in \X$.
Thanks to condition (B), the occupation measure $\mu_X$ is a.s.\ locally finite, so it is a Radon measure as locally finite Borel measure on a Polish  space.

\section{Examples of $d$-dimensional Processes Admitting Perpetual Integral Functionals}
\label{sec:Examples}
\subsection{Brownian Motion}
Denote $B=\{B(t),t\ge 0\}$ the Bm in $\X$ starting from the point $x\in\X$. It is a Markov process with generator $\Delta$. We are interested in the conditions under which the perpetual integral functional
 $Y(f) =\int_0^\infty f(B(t))\,{\mathrm{d}t} $ does exists
for a certain class of functions $f:\X\to \R$. As mention above, we consider non-negative functions $f$. Introduce the following class of functions
$$CL(\X)=\{f:\X\rightarrow \R: f\; \text{is continuous, bounded and belongs to}\; L_1(\X) \}.$$
It is a Banach space with the norm $\|f\|_{CL}:=\sup|f| +\|f\|_{L_1(\X)}$.

 \begin{theorem}\label{theo1}  For any $f\in CL(\X)$ the perpetual integral functional $\int_0^\infty f(B(t))\,{\mathrm{d}t}$ exists a.s., its  expectation equals
 \begin{equation}
 \begin{gathered}\label{eq:exp-BM}
 E\left[\int_0^\infty f(B(t))\,{\mathrm{d}t}\right]=\frac{2^{d/2-1}\Gamma\left(d/2-1\right)}{(2\pi)^{d/2}}\int_{\X}\frac{f(x+y) }{|y|^{d-2}}\,{\mathrm{d}y},
 \end{gathered}
 \end{equation}
  whereas  variance equals
 \begin{equation}
 \begin{gathered}\label{eq:var}
 V(f):=E\left[\left(\int_0^\infty f(B(t))\,{\mathrm{d}t}-E\left[\int_0^\infty f(B(t))\,{\mathrm{d}t}\right]\right)^2\right]\\=\frac{2^{d-2}\Gamma^2(d/2-1)}{(2\pi)^d} \left(2\int_{\X}\int_{\X}\frac{f(x+y)f(x+y+z)}{|y|^{d-2}|z|^{d-2}}\,{\mathrm{d}y}\,{\mathrm{d}z} -  \left(\int_{\X} \frac{f(x+y)} {|y|^{d-2}}\,{\mathrm{d}y}\right)^2 \right).
 \end{gathered}
 \end{equation}
 For $f\neq0$, $\int_0^\infty f(B(t))\,{\mathrm{d}t}$ is a non-constant random variable, consequently for $f\neq0$ the right-hand side of   \eqref{eq:var} is strictly positive.
\end{theorem}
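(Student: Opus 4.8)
The plan is to treat the three assertions in turn, reducing the expectation and variance to Green-function integrals and then settling non-constancy by a support argument. The common computational tool is the transition density $p_t(x,y)=(2\pi t)^{-d/2}\exp(-|x-y|^2/(2t))$ of $B$ and its time integral, the Green function $G(x,y)=\int_0^\infty p_t(x,y)\,\mathrm{d}t$. A substitution $s=|x-y|^2/(2t)$ inside the $t$-integral turns it into a Gamma integral and yields $G(x,y)=\frac{\Gamma(d/2-1)}{2\pi^{d/2}}|x-y|^{-(d-2)}$, which is precisely the constant appearing in \eqref{eq:exp-BM}; note its finiteness requires $d\ge3$, so that $\Gamma(d/2-1)$ and the integral converge.

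For existence and the expectation I would first verify the standing hypotheses: $(B)$ is transience of $B$ for $d\ge3$, while $(A)$ is a byproduct of the expectation computation. By Tonelli (legitimate as $f\ge0$), $E\big[\int_0^\infty f(B(t))\,\mathrm{d}t\big]=\int_0^\infty\!\int_{\X}f(y)p_t(x,y)\,\mathrm{d}y\,\mathrm{d}t=\int_{\X}f(y)G(x,y)\,\mathrm{d}y$, and the change of variables $y=x+z$ gives \eqref{eq:exp-BM}. To see finiteness for $f\in CL(\X)$, split the $z$-integral at $|z|=1$: near the origin the singularity $|z|^{-(d-2)}$ is integrable (since $d-2<d$) while $f$ is bounded, and for $|z|\ge1$ the kernel is at most $1$ while $f\in L_1(\X)$. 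This establishes $(A)$ and hence a.s.\ existence.

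For the variance I would compute the second moment. Writing the square as a double time integral and using symmetry in $(s,t)$, $E\big[(\int_0^\infty f(B(t))\,\mathrm{d}t)^2\big]=2E\big[\int_0^\infty\!\int_s^\infty f(B(s))f(B(t))\,\mathrm{d}t\,\mathrm{d}s\big]$. The Markov property factorizes $E[f(B(s))f(B(t))]=\int\!\int f(y)f(z)p_s(x,y)p_{t-s}(y,z)\,\mathrm{d}y\,\mathrm{d}z$; integrating first in $t$ (which gives $G(y,z)$) and then in $s$ (which gives $G(x,y)$) collapses this to $2\int\!\int f(y)f(z)G(x,y)G(y,z)\,\mathrm{d}y\,\mathrm{d}z$. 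The substitution $y=x+u$, $z=x+u+v$ together with $c_d^2=2^{d-2}\Gamma^2(d/2-1)/(2\pi)^d$ yields the first term of \eqref{eq:var}, and subtracting the square of \eqref{eq:exp-BM} gives the stated expression. Finiteness of this second moment — needed so that the variance is genuinely finite — follows because the potential $y\mapsto\int_{\X}f(z)G(y,z)\,\mathrm{d}z$ is bounded on $\X$, by the same splitting as before.

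The delicate point is non-constancy, which I would settle by contradiction using the (conditional) full support of Brownian paths. Suppose $\int_0^\infty f(B(t))\,\mathrm{d}t=c$ a.s.; since \eqref{eq:exp-BM} shows $c<\infty$ and $f\ge0$, the monotone partial integrals obey $\int_0^S f(B(t))\,\mathrm{d}t\le c$ for every $S$, a.s. As $f$ is continuous, nonnegative and not identically zero, there exist $x_0$, $\rho>0$, $\delta>0$ with $f\ge\delta$ on $B(x_0,\rho)$. Fix $T>c/\delta$ and a continuous template $g:[0,T+1]\to\X$ with $g(0)=x$ that enters $B(x_0,\rho/2)$ by time $1$ and remains there; full support on $[0,T+1]$ (the $t=0$ instance of conditional full support) gives positive probability to the tube $A=\{\sup_{0\le s\le T+1}|B_s-g(s)|<\rho/2\}$. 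On $A$ one has $f(B_s)\ge\delta$ for $s\in[1,T+1]$, so $\int_0^{T+1}f(B_s)\,\mathrm{d}s\ge\delta T>c$, contradicting the a.s.\ bound. Hence $Y(f)$ is non-constant and, its second moment being finite, its variance \eqref{eq:var} is strictly positive. The main obstacle is transferring a finite-horizon support statement to the infinite-horizon functional; monotonicity of the partial integrals (from $f\ge0$) is exactly what bridges this gap.
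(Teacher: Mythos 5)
Your computations of the expectation and the variance follow essentially the same route as the paper: Tonelli for nonnegative $f$, the Gamma-integral identity $\int_0^\infty(2\pi t)^{-d/2}e^{-|y|^2/(2t)}\,\mathrm{d}t=\frac{2^{d/2-1}\Gamma(d/2-1)}{(2\pi)^{d/2}}|y|^{2-d}$, the splitting of the $y$-integral at $|y|=1$ to get finiteness from boundedness plus integrability of $f$, and the ordered double time integral factorized by the Markov property (the paper phrases this via independent increments of $B$, which is the same computation). The non-constancy argument, however, is genuinely different. The paper conditions on the future: it invokes the regular conditional distribution of the path on $[0,t_0]$ given the path on $[t_0,\infty)$, assumes the conditional full support property \eqref{eq:supp}, deforms the initial segment into a piecewise-linear path sitting at a point $a$ for time close to $t_0$, and concludes $t_0f(a)+\int_{t_0}^\infty f(u(t))\,\mathrm{d}t=c$, which forces $f$ to be constant and hence (being integrable) zero. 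You instead use only the unconditional full support of Wiener measure on a compact time interval, combined with the monotonicity of the partial integrals $\int_0^S f(B_t)\,\mathrm{d}t\le c$ that follows from $f\ge 0$: a tube event of positive probability around a path that lingers where $f\ge\delta$ pushes the finite-horizon integral above $\delta T>c$, a contradiction. Your route is more elementary -- it needs neither regular conditional distributions nor conditional full support, only the standard support theorem for Brownian motion -- and it is complete under the paper's standing assumption that $f$ is non-negative; that sign assumption is the one ingredient you use that the paper's argument does not, since it is what converts the infinite-horizon identity into a finite-horizon inequality. The paper's conditional-support argument is the one it reuses for fractional Brownian motion (quoting Cherny), but your finite-horizon tube argument would transfer there just as well, since fBm also has full support on compact intervals.
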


\begin{proof} Concerning the expectation, for non-negative $f$ we can apply Fubini theorem and obtain
\begin{equation*}\begin{gathered}
E\left[\int_0^\infty f(B(t))\,{\mathrm{d}t}\right]=\int_0^\infty\int_{\X}f(x+y) \left(2\pi t\right)^{-d/2}\exp\left\{-\frac{|y|^2}{2t}\right\}\,{\mathrm{d}y}\,{\mathrm{d}t}\\=
\int_{\X}f(x)\bigg( \int_0^\infty\left(2\pi t\right)^{-d/2}\exp\left\{-\frac{|y|^2}{2t}\right\}\,{\mathrm{d}t}\bigg)\,{\mathrm{d}x}.
\end{gathered}\end{equation*}
Now, after elementary calculations the inner integral  can be transformed as follows:
\begin{equation}\label{eq:int}
\int_{0}^\infty(2\pi t)^{-d/2}\exp\left\{-\frac{|y|^2}{2t}\right\}\,{\mathrm{d}t}=\frac{2^{d/2-1}\Gamma\left(d/2-1\right)}{(2\pi)^{d/2}|y|^{d-2}},
\end{equation}
which gives \eqref{eq:exp-BM}. The integral on the right-hand side of \eqref{eq:exp-BM} is finite. In fact, we may use the local integrability of $|y|^{2-d}$ in $y$ and conclude that
\begin{equation}\begin{gathered}\label{upperbound}
  \int_{\X} \frac{f(x+y)}{| y|^{d-2}}\,{\mathrm{d}y} \leq  \int_{| y|\leq 1} \frac{f(x+y)}{| y|^{d-2}}\,{\mathrm{d}y}
+   \int_{| y|>1} \frac{f(x+y)}{| y|^{d-2}}\,{\mathrm{d}y} \\ \leq
 C_1 \|f\|_\infty + C_2 \|f\|_1 \leq C\|f\|_{CL},
 \end{gathered}\end{equation}
  so, the integral in \eqref{eq:exp-BM} is indeed correctly defined. It means that $\int_0^\infty f(B(t))\,{\mathrm{d}t}$ exists with probability 1. Concerning its variance, let $0<s<u$. Taking into account that $B$ has  independent increments, so $B(s)$ and $B(u)-B(s)$ are independent, we claim that the following equality holds
\begin{equation}
\begin{gathered}\label{2nd-mom-point}
E[f(B(u))f(B(s))]=E[f(B(u)-B(s)+B(s))f(B(s))]\\=\int_{\X}\int_{\X}f(x+y)f(x+y+z)\frac{\exp\left\{-\frac{|y|^2}{2s}\right\}}{(2\pi s)^{d/2}}\frac{\exp\left\{-\frac{|z|^2}{2(u-s)}\right\}}{(2\pi (u-s))^{d/2}}\,{\mathrm{d}y}\,{\mathrm{d}z}.
\end{gathered}\end{equation}
Therefore we immediately get that
\begin{equation}
\begin{gathered}\label{2nd-mom-contin}
E\left[\left(\int_0^{\infty} f(B(t))\,{\mathrm{d}t}\right)^2\right]=\int_{0}^{\infty}\int_{0}^\infty E\left[f(B(u))f(B(s))\right]\,{\mathrm{d}u}\,{\mathrm{d}s}=2\int_{0}^\infty\int_{s}^\infty E\left[f(B(u))f(B(s))\right]\,{\mathrm{d}u}\,{\mathrm{d}s}\\
=2\int_{\X}\int_{\X}f(x+y)f(x+y+z)\int_{0}^{\infty}\int_{s}^\infty\frac{\exp\left\{-\frac{|y|^2}{2s}\right\}}{(2\pi s)^{d/2}}\frac{\exp\left\{-\frac{|z|^2}{2(u-s)}\right\}}{(2\pi (u-s))^{d/2}}\,{\mathrm{d}u}\,{\mathrm{d}s}\,{\mathrm{d}y}\,{\mathrm{d}z}\\=
2\int_{\X}\int_{\X}f(x+y)f(x+y+z)\int_{0}^{\infty}\int_{0}^\infty\frac{\exp\left\{-\frac{|y|^2}{2s}\right\}}{(2\pi s)^{d/2}}\frac{\exp\left\{-\frac{|z|^2}{2u}\right\}}{(2\pi u)^{d/2}}\,{\mathrm{d}u}\,{\mathrm{d}s}\,{\mathrm{d}y}\,{\mathrm{d}z}.
\end{gathered}
\end{equation}
 Now, \eqref{eq:var}   follows from   \eqref{2nd-mom-contin}  combined with  the following simple observation:  for non-negative function $f\in CL(\X)$
$$\int_{\X}\int_{\X}\frac{f(x+y)f(x+y+z)}{|y|^{d-2}|z|^{d-2}}\,{\mathrm{d}y}\,{\mathrm{d}z}\le \left( C_1 \|f\|_\infty + C_2 \|f\|_1\right)^2\leq (C\|f\|_{CL})^2.$$
The most interesting part of the proof is to establish that for $f\neq0$, $\int_0^\infty f(B(t))\,{\mathrm{d}t}$ is a non-constant random variable. Consider any continuous stochastic process $X$ and without loss of generality assume that $X(0)=0.$ Let $t_0>0$ and $z\in C([0,t_0])$ be given and let
$P_{z, t_0}(u), u\in C([t_0,\infty))$ be the regular conditional distribution of $\{X(t), t\in [0,t_0]\}$ under the condition
$\{(X(t), t\ge t_0)=u\}.$ Assume that for a.a.~$u\in C([t_0,\infty))$
\begin{equation}\label{eq:supp} \operatorname{supp} P_{z, t_0}(u)=\{y\in C([0,t_0]): z(t_0)=y(t_0)\}. \end{equation}
Now, assume  the contrary, that is, there is a constant $c>0$ and a  function $f\in C(\R^d)$ such that
$$
A:=\int_0^\infty f(X(t))\,{\mathrm{d}t}=c,\quad \mathrm{a.s.}
$$
Since  $A=\int_0^{t_0} f(X(t))\,{\mathrm{d}t}+ \int^\infty_{t_0} f(X(t))\,{\mathrm{d}t}$, we can write that
\begin{equation*}0= P(A\neq c)=\int_{C([t_0,\infty))}P_{z, t_0}\left(\left\{\int_0^{t_0} f(X(t))\,{\mathrm{d}t}+\int^\infty_{t_0} f(X(t))\,{\mathrm{d}t} \neq c\right\},u\right)P({\mathrm{d} u}), \end{equation*}
we conclude that
$$
P_{z, t_0}\left(\left\{\int_0^{t_0} f(X(t))\,{\mathrm{d}t}+\int^\infty_{t_0} f(X(t))\,{\mathrm{d}t} \neq c\right\},u\right)=0
$$
for a.a.~$u\in C([t_0,\infty))$. Now,  from \eqref{eq:supp} combined with the  continuity of the map $z\rightarrow \int_0^{t_0} f(z(t))\,{\mathrm{d}t}$ in the sup-norm, it follows that
\begin{equation}
\int_0^{t_0} f(v(t))\,{\mathrm{d}t}+\int^\infty_{t_0} f(u(t))\,{\mathrm{d}t}=c
\end{equation}
for any $v\in C([0,\infty))$ such that $v(0)=0, v(t_0)=u(t_0)$ and for a.a.~$u\in C([t_0,\infty])$. Let us fix such $t_0$
and consider the  functions
$$z_\varepsilon(t)=\begin{cases}
\frac{at}{\varepsilon} , &t\in [0,\varepsilon],\\
a, &t\in (\varepsilon,t_0-\varepsilon)\\
\frac{u(t_0)(t-t_0+\varepsilon)+a(t_0-t)}{\varepsilon}, &t\in [t_0-\varepsilon,t_0].
\end{cases}
$$
Then we get   the equality
$$\int_0^{t_0}f(z_\varepsilon(t))\,{\mathrm{d}t}+\int_{t_0}^\infty f(u_0(t))\,{\mathrm{d}t}=c,$$
and  tending $\varepsilon\rightarrow 0,$
we conclude that $$t_0f(a)+\int_{t_0}^\infty f(u_0(t))\,{\mathrm{d}t}=c.$$
Hence, $$ f(a)=\frac{c-\int_{t_0}^\infty f(u_0(t))\,{\mathrm{d}t}}{t_0} ,$$
which means that $f$ is a constant consequently zero function. Note that assumption \eqref{eq:supp} is fulfilled for Brownian motion, whence the proof follows.
\end{proof}

\subsection{Fractional Brownian Motion}

Consider a $d$-dimensional fBm with Hurst parameter $H\in(0,1)$, namely, $B^H(t)=(B_1^H(t),\ldots,B_d^H(t)),$ where all coordinates $B_i^H$ are independent 1-dimensional fBms, i.e., Gaussian processes with zero mean and covariance function
$$
E[B_i^H(t)B_i^H(s)]=\frac{1}{2}\big(t^{2H}+s^{2H}-|t-s|^{2H}\big).
$$

As above we introduce the perpetual integral functional $ \int_0^\infty f(x+B^H(t))\,{\mathrm{d}t}$.

\begin{theorem}
Let $d>1/H $ and $x\in\X$ be given. Then for any $f\in CL(\X)$ the perpetual integral functional $\int_0^\infty f(x+B(t))\,{\mathrm{d}t}$ exists a.s., its  expectation equals
\begin{equation}
\begin{gathered}\label{fract}
E\left[\int_0^\infty f(x+B^H(t))\,{\mathrm{d}t}\right]=C_{d,H}\int_{\X}\frac{f(x+y)}{|y|^{d-1/H}}\,{\mathrm{d}y},
\end{gathered}
\end{equation}
where
$$
C_{d,H}=2^{-(1+1/(2H))}H^{-1}\pi^{-d/2}\Gamma\left(\frac{d}{2}-\frac{1}{2H}\right).
$$
 For $f\neq0$, $\int_0^\infty f(x+B^H(t))\,{\mathrm{d}t}$ is a non-constant random variable consequently for $f\neq0$ the variance of $\int_0^\infty f(x+B^H(t))\,{\mathrm{d}t}$ is strictly positive.

\end{theorem}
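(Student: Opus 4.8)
The plan is to follow the three-part structure of Theorem~\ref{theo1}, replacing the Brownian marginals by those of fBm. Since the coordinates $B_i^H$ are independent centered Gaussians with $E[(B_i^H(t))^2]=t^{2H}$, the vector $B^H(t)$ has at each fixed $t$ the density $(2\pi t^{2H})^{-d/2}\exp\{-|y|^2/(2t^{2H})\}$. For nonnegative $f$, Fubini therefore yields
\begin{equation*}
E\left[\int_0^\infty f(x+B^H(t))\,{\mathrm{d}t}\right]=\int_{\X}f(x+y)\left(\int_0^\infty (2\pi t^{2H})^{-d/2}\exp\left\{-\frac{|y|^2}{2t^{2H}}\right\}\,{\mathrm{d}t}\right)\,{\mathrm{d}y},
\end{equation*}
so everything reduces to the fBm analogue of the inner integral \eqref{eq:int}. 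I would evaluate it by the change of variables $s=t^{2H}$ followed by $u=|y|^2/(2s)$, which collapses it to the Gamma integral $\int_0^\infty u^{d/2-1/(2H)-1}e^{-u}\,{\mathrm{d}u}=\Gamma(d/2-1/(2H))$. Collecting the powers of $2$ and $\pi$ then reproduces exactly the constant $C_{d,H}$ together with the kernel $|y|^{-(d-1/H)}$, giving \eqref{fract}.

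The hypothesis $d>1/H$ enters only to make this kernel integrable against $f$. Because $0<d-1/H<d$, the function $|y|^{-(d-1/H)}$ is locally integrable at the origin (exponent $<d$) and bounded by $1$ outside the unit ball (exponent $>0$); I would therefore split the integral in \eqref{fract} over $\{|y|\le1\}$ and $\{|y|>1\}$ precisely as in \eqref{upperbound}, bounding the two pieces by $C_1\|f\|_\infty$ and $C_2\|f\|_1$. This shows the right-hand side of \eqref{fract} is finite, hence condition \eqref{A} holds and the a.s.\ existence of the functional follows as before.

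For non-constancy I would reuse, without change, the general argument embedded in the proof of Theorem~\ref{theo1}: for \emph{any} continuous process $X$ whose regular conditional distributions satisfy the full-support condition \eqref{eq:supp}, the piecewise-linear perturbations $z_\varepsilon$ force $f(a)$ to be independent of $a$, so $\int_0^\infty f(X(t))\,{\mathrm{d}t}$ cannot be a.s.\ constant unless $f\equiv0$. Positivity of the variance is then automatic, which is fortunate since fBm, lacking independent increments, admits no clean second-moment analogue of \eqref{eq:var} to compute directly. The only thing that genuinely needs checking is thus that $d$-dimensional fBm satisfies \eqref{eq:supp}.

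This last point is where I expect the real difficulty. For Brownian motion \eqref{eq:supp} is classical and rests on independence of increments; fBm is neither Markovian nor a semimartingale for $H\neq1/2$, so the conditional law of $\{B^H(t):t\in[0,t_0]\}$ given the future $\{B^H(t):t\ge t_0\}$ is a nondegenerate Gaussian measure on $C([0,t_0])$ whose support must be identified with the pinned affine subspace $\{y:y(t_0)=z(t_0)\}$. I would establish this via the conditional-full-support property of fBm: one must verify that conditioning on the future tail constrains the past only through the single endpoint value $y(t_0)$, leaving the conditional covariance nondegenerate in the complementary directions, equivalently that the reproducing-kernel structure of fBm is rich enough for the conditional support to fill that whole subspace. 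Granting this, \eqref{eq:supp} holds for fBm and the proof closes exactly as in Theorem~\ref{theo1}.
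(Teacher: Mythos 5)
Your computation of the expectation coincides with the paper's: Fubini with the Gaussian density $(2\pi t^{2H})^{-d/2}e^{-|y|^{2}/(2t^{2H})}$, the change of variables reducing the time integral to $\Gamma(d/2-1/(2H))$, and the finiteness of $\int_{\X}f(x+y)|y|^{-(d-1/H)}\,\mathrm{d}y$ via the split over $\{|y|\le 1\}$ and $\{|y|>1\}$ exactly as in \eqref{upperbound}, using $0<d-1/H<d$. That part is complete and correct.

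The non-constancy argument, however, has a genuine gap, located exactly where you write ``granting this.'' Condition \eqref{eq:supp} requires full support of the conditional law of the \emph{past} $\{B^H(t),\,t\in[0,t_0]\}$ given the \emph{future} $\{B^H(t),\,t\ge t_0\}$, whereas the available conditional-full-support result for fBm (Cherny \cite{Chern}, on Brownian moving averages) goes in the opposite direction: it concerns the conditional law of the future given the past. Your sketch --- that conditioning on the future tail constrains the past only through $y(t_0)$ and leaves the conditional covariance nondegenerate --- is a restatement of what must be proved, not a proof; for a non-Markovian Gaussian process the conditional law given an infinite-dimensional portion of the trajectory cannot be identified by inspection. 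The paper closes this gap with a time-inversion trick: the map $R(z)(t)=t^{2H}z(1/t)$ is an involution that preserves the law of $B^H$ and exchanges $[0,t_0]$ with $[1/t_0,\infty)$, so the conditional law of the past given the future is the image under $R$ of the conditional law of $\{B^H(t),\,t\ge 1/t_0\}$ given $\{B^H(t),\,0<t\le 1/t_0\}$, and the latter has full support by \cite{Chern}. Without this reduction, or some substitute for it, your verification of \eqref{eq:supp} for fBm --- and hence the last assertion of the theorem --- is incomplete.
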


\begin{proof} The density of distribution of $B^H(t)$ equals $(2\pi t^{2H})^{-d/2}e^{-|x|^2/(2t^{2H})}$.
Then equality  \eqref{fract} and the upper bound  for  $E\big[\int_0^\infty f(x+B^H(t))\,{\mathrm{d}t}\big]$ are established in the same lines as the proof of Theorem \ref{theo1}. Concerning the fact that for $f\neq0$, $\int_0^\infty f(x+B^H(t))\,{\mathrm{d}t}$ is a non-constant random variable, for any function $z: (0,\infty) \rightarrow \X$ define $R(z)(t)=t^{2H}z(1/t).$  It is easy to see that $R(z)^2$ is an identical transformation and $R(B^H)(t),t\ge 0$ has the same distribution as $\{B^H(t),t\ge 0\}$.
Then for any $t_0>0$ the distribution of $\{B^H(t),t\in[0,t_0]\}$ given $\{B^H(t),t\ge t_0\}=:u\in C([t_0,\infty))$ is the same as the image under $R$ of the conditional distribution of $\{B^H(t),t\ge 1/t_0\}$ given $\{B^H(t),0<t\le 1/t_0\}=R(u).$ Since the latter has full support, see the paper \cite{Chern}, the proof follows as the respective proof in Theorem \ref{theo1}.
\end{proof}

\subsection{Compound Poisson Process}
Let $\xi_k$, $k\ge 1$ be a sequence of iid $d$-dimensional random variables, and $N=\{N(t), t\ge 0\}$ a homogeneous Poisson process with intensity $\lambda>0$, independent of $\xi_k, k\ge 1$. Denote $X=\{X(t),t\ge 0\}$ the compound Poisson process  in $\X$ starting from zero, i.e.,
$$
X(t)=\sum_{k=1}^{N(t)}\xi_k,
$$
where we use the convention that $\sum_{k=1}^{0}=0.$ The process $X(t)$, $t\ge 0$ has independent increments consequently a Markov process, and if we assume that any $\xi_k$ has probability density $a$ and that $\lambda=1$, then its  generator is defined on $CL(\X)$ and equals $$
Lf(x)= \int_{\X} a(x-y)[f(y)-f(x)]\,{\mathrm{d}y}.
$$  We are interested on the  conditions under which the perpetual integral functional
 $Y(f) =\int_0^\infty f(X(t))\,{\mathrm{d}t} $ does exists
for  non-negative $f\in CL(\X)$.  In this connection, our goal now is to check condition \eqref{A}.  Denote $a_k(x)=a^{*k}(x)$ the $k$-fold convolution of the density $a$ and let
$$
G_0 (x)= \sum_{k=1}^\infty {a_k(x)},
$$
provided that this series converges for any $x\in \X$.
\begin{theorem}\label{theo3} Assume that any $\xi_k$ has probability density $a$ and   $\lambda=1$. Assume also that  $G_0 (x)$ is integrable in some ball $B(0,R)$ and bounded outside this ball. Then for any $f\in CL(\X)$ and every  $x\in \X$ the perpetual integral functional $\int_0^\infty f(x+X(t))\,{\mathrm{d}t}$ exists a.s., its  expectation equals \begin{equation}
\begin{gathered}\label{eq:exp}
E\left[\int_0^\infty f(x+X(t))\,{\mathrm{d}t}\right]= f(x)+\int_{\X}  f(x+y) G_0(y)\,{\mathrm{d}y},\end{gathered}\end{equation}
  whereas  variance equals
 \begin{equation}
 \begin{gathered}\label{eq:var1}
 V(f):=E\left[\left(\int_0^\infty f(x+X(t))\,{\mathrm{d}t}-E\left[\int_0^\infty f(x+X(t))\,{\mathrm{d}t}\right]\right)^2\right]\\= 2\int_{\X}\int_{\X} f(x+y)f(x+y+z)  G_0(y)G_0(z)\,{\mathrm{d}y}\,{\mathrm{d}z} -  \left(\int_{\X}  f(x+y)G_0(y)\,{\mathrm{d}y}\right)^2.
 \end{gathered}\end{equation}

 \end{theorem}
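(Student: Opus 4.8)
The plan is to reduce both moments to a single \emph{potential measure}, obtained by integrating the one-point law of $X$ in time, and then to reuse the independent-increments computation from Theorem~\ref{theo1}. First I would record the law of $X(t)$: conditioning on $N(t)$, which is Poisson with parameter $t$ since $\la=1$, gives
\[
p_t(\mathrm{d}y)=e^{-t}\delta_0(\mathrm{d}y)+\sum_{k=1}^\infty\frac{t^k e^{-t}}{k!}\,a_k(y)\,\mathrm{d}y,
\]
the atom at the origin encoding the event ``no jump up to time $t$''. As $f\ge0$, Tonelli permits term-by-term integration in $t$, and the identities $\int_0^\infty e^{-t}\,\mathrm{d}t=1$, $\int_0^\infty t^k e^{-t}\,\mathrm{d}t=k!$ cancel the factorials, producing
\[
U(\mathrm{d}y):=\int_0^\infty p_t(\mathrm{d}y)\,\mathrm{d}t=\delta_0(\mathrm{d}y)+G_0(y)\,\mathrm{d}y.
\]
Hence $E\big[\int_0^\infty f(x+X(t))\,\mathrm{d}t\big]=\int_{\X}f(x+y)\,U(\mathrm{d}y)$, which is exactly \eqref{eq:exp}.

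To obtain \eqref{A}, and thus the a.s.\ existence asserted in Section~\ref{sec:Existence-PIF}, I would split $\int_{\X}f(x+y)G_0(y)\,\mathrm{d}y$ at $|y|=R$: on $B(0,R)$ bound $f$ by $\|f\|_\infty$ and use the integrability of $G_0$ there, while on $\{|y|>R\}$ bound $G_0$ by $\sup_{|y|>R}G_0$ and use $f\in L_1(\X)$. This is the only place the two hypotheses on $G_0$ are used.

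For the variance I would mimic \eqref{2nd-mom-point}--\eqref{2nd-mom-contin}. Writing the square as a double time integral, restricting to $s<u$ by symmetry, and using that $X(u)-X(s)\stackrel{d}{=}X(u-s)$ is independent of $X(s)$, the substitution $v=u-s$ together with Tonelli factorizes the two time integrals into two independent copies of $U$:
\[
E\Big[\Big(\int_0^\infty f(x+X(t))\,\mathrm{d}t\Big)^2\Big]=2\int_{\X}\int_{\X}f(x+y)f(x+y+z)\,U(\mathrm{d}y)\,U(\mathrm{d}z).
\]
The $G_0\otimes G_0$ part of this double integral, minus the squared mean $\big(\int_{\X}f(x+y)G_0(y)\,\mathrm{d}y\big)^2$, reproduces the two terms displayed in \eqref{eq:var1}.

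The step I expect to be decisive is the treatment of the atom of $U$ at the origin, which is exactly what separates this pure-jump process---with its exponential holding time at the starting point---from the continuous Brownian case. Expanding $U=\delta_0+G_0\,\mathrm{d}y$ in the second moment produces four products, not one, and an honest accounting keeps, besides the $G_0\otimes G_0$ term, the two further nonnegative contributions $f(x)^2$ (from $\delta_0\otimes\delta_0$) and $2\int_{\X}f(x+y)^2 G_0(y)\,\mathrm{d}y$ (from the mixed term with coincident arguments). Consequently the variance I would actually arrive at is
\begin{multline*}
V(f)=f(x)^2+2\int_{\X}f(x+y)^2 G_0(y)\,\mathrm{d}y\\
+2\int_{\X}\int_{\X}f(x+y)f(x+y+z)G_0(y)G_0(z)\,\mathrm{d}y\,\mathrm{d}z-\Big(\int_{\X}f(x+y)G_0(y)\,\mathrm{d}y\Big)^2,
\end{multline*}
and \eqref{eq:var1} is recovered only upon discarding the atom (i.e.\ replacing $U$ by $G_0\,\mathrm{d}y$). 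Since the same atom is already present in the mean \eqref{eq:exp}, reconciling these two extra terms is, I expect, the heart of the matter.
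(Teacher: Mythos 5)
Your proposal follows the same route as the paper for the expectation: condition on $N(t)$, integrate the Poisson weights in $t$ to collapse the factorials, and split the resulting integral at $|y|=R$ to verify condition \eqref{A}; the paper then disposes of the variance with the remark that the calculation is ``similar to Theorem~\ref{theo1}''. Your treatment of the second moment is the genuinely valuable part, and it is correct where the paper's stated formula is not. The one-point law of the compound Poisson process carries an atom $e^{-t}\delta_0$ (the event of no jump by time $t$), so the potential measure is $U(\mathrm{d}y)=\delta_0(\mathrm{d}y)+G_0(y)\,\mathrm{d}y$; the paper keeps the resulting $f(x)$ in the mean \eqref{eq:exp} but silently drops the corresponding contributions to the second moment when it transplants the Brownian computation, for which no atom exists. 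Expanding $2\int_{\X}\int_{\X} f(x+y)f(x+y+z)\,U(\mathrm{d}y)\,U(\mathrm{d}z)$ and subtracting the squared mean gives exactly your
\[
V(f)=f(x)^2+2\int_{\X}f(x+y)^2G_0(y)\,\mathrm{d}y+2\int_{\X}\int_{\X}f(x+y)f(x+y+z)G_0(y)G_0(z)\,\mathrm{d}y\,\mathrm{d}z-\Big(\int_{\X}f(x+y)G_0(y)\,\mathrm{d}y\Big)^2,
\]
and the two extra nonnegative terms are real: writing $A=f(x)\tau_1+A'$ with $\tau_1\sim\mathrm{Exp}(1)$ the first jump time, independent of the post-jump functional $A'$, already forces $\mathrm{Var}(A)=f(x)^2\,\mathrm{Var}(\tau_1)+\mathrm{Var}(A')\ge f(x)^2$, which \eqref{eq:var1} cannot reproduce when $f(x)>0$. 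So there is nothing to ``reconcile'': your accounting of the atom is the correct one, and \eqref{eq:var1} as printed undercounts the variance by $f(x)^2+2\int_{\X}f(x+y)^2G_0(y)\,\mathrm{d}y$.
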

 \begin{proof}   It follows from the independence of $\xi$ and $N$ that
 \begin{equation*}
 \begin{gathered}
 E[f(x+X(t))]=\sum_{n=0}^\infty E\left[f\left(x+\sum_{k=1}^n \xi_k\right)\right]P(N(t)=n)\\
 =f(x)e^{-t}+\sum_{n=1}^\infty e^{-t}\frac{t^n}{n!}\int_{\X}f(x+y)a_n(y)\,{\mathrm{d}y}.
 \end{gathered}\end{equation*}
 Hence, formally
 \begin{equation*}\begin{gathered}
 E\left[\int_0^\infty f(x+X(t))\,{\mathrm{d}t}\right]= f(x)+\int_{\X}\sum_{n=1}^\infty f(x+y)a_n(y)\,{\mathrm{d}y}\int_0^\infty e^{-t}\frac{t^n}{n!}\,{\mathrm{d}t}\\
 =f(x)+\int_{\X}  f(x+y)G_0(y)\,{\mathrm{d}y},
 \end{gathered}\end{equation*}
 and under the assumption that  $G_0 (y)$ is integrable in some ball $B(0,R)$ and bounded outside this ball, we obtain (similarly to \eqref{upperbound}) the existence of the expectation. Further calculations are also similar to the respective calculations in the proof of Theorem \ref{theo1}.
 \end{proof}
 Let us provide one simple sufficient condition for the boundedness and integrability of $G_0$. Taking into account the condition in Theorem \ref{theo3}, it will mean that for such $\xi$  the perpetual functional exists.

\begin{lemma}\label{lemmm}
Let the jump kernel be symmetric, $a(x)=a(-x)$, has the second moment,
 $$\int_{\X}|x|^2a(x)\,{\mathrm{d}x}<\infty,$$ and its Fourier transform $\hat{a}$ is integrable. Then  $G_0(x)$ exists for any $x\in\X$, is integrable   and bounded.
 \end{lemma}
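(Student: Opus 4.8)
The plan is to pass to Fourier transforms, where the $k$-fold convolution becomes a $k$-th power and the whole series $G_0=\sum_{k\ge1}a_k$ collapses to a geometric series. Writing $\hat a(\xi)=\int_{\X}e^{i\langle\xi,x\rangle}a(x)\,{\mathrm{d}x}$, the symmetry $a(x)=a(-x)$ forces $\hat a$ to be real-valued, and since $a$ is a probability density one has $\hat a(0)=1$ and $|\hat a(\xi)|\le1$ everywhere. Because $a\in L^1(\X)$ and, by hypothesis, $\hat a\in L^1(\X)$, and since $|\hat a|\le1$ gives $\hat a^k\in L^1(\X)$ for every $k\ge1$, Fourier inversion applies to each $a_k=a^{*k}$: it has a bounded continuous version $a_k(x)=(2\pi)^{-d}\int_{\X}e^{-i\langle\xi,x\rangle}\hat a(\xi)^k\,{\mathrm{d}\xi}$, whence the crude bound $|a_k(x)|\le(2\pi)^{-d}\int_{\X}|\hat a(\xi)|^k\,{\mathrm{d}\xi}$ holds uniformly in $x$. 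Summing in $k$ and using Tonelli, the entire lemma reduces to the single estimate
\[
I:=\int_{\X}\frac{|\hat a(\xi)|}{1-|\hat a(\xi)|}\,{\mathrm{d}\xi}<\infty ,
\]
because $\sum_{k\ge1}|\hat a(\xi)|^k=|\hat a(\xi)|/(1-|\hat a(\xi)|)$ for $\xi\neq0$. Once $I<\infty$ is known, $\sum_{k\ge1}|a_k(x)|\le(2\pi)^{-d}I$ for every $x$, so the series defining $G_0$ converges absolutely and uniformly; its sum is then finite everywhere, bounded by $(2\pi)^{-d}I$, and continuous as a uniform limit of continuous functions. Global boundedness in turn gives integrability on every ball, so both properties required in Theorem \ref{theo3} (integrability near the origin, boundedness away from it) follow immediately.

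To prove $I<\infty$ I would split $\X$ into three regions. On a small ball $\{|\xi|<\varepsilon\}$ I would use that the finite second moment makes $\hat a$ twice continuously differentiable, with Taylor expansion $\hat a(\xi)=1-\tfrac12\langle\xi,\Sigma\xi\rangle+o(|\xi|^2)$, where $\Sigma=\int_{\X}xx^{\top}a(x)\,{\mathrm{d}x}$ and where symmetry has removed the first-order term. Near $0$ one has $\hat a>0$, so $|\hat a|=\hat a$, and since $a$ is a genuine density on $\X$ the matrix $\Sigma$ is positive definite, giving $1-|\hat a(\xi)|\ge c|\xi|^2$ for $\xi$ small; the integrand is then $\le C|\xi|^{-2}$, integrable near $0$ precisely because $d\ge3$. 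On an intermediate annulus $\{\varepsilon\le|\xi|\le M\}$ I would use that an absolutely continuous law has $|\hat a(\xi)|<1$ for all $\xi\neq0$; by continuity $|\hat a|$ attains a maximum strictly below $1$ on this compact set, so the integrand is bounded and the region has finite measure. Finally, on $\{|\xi|>M\}$ the Riemann--Lebesgue lemma applied to $a\in L^1(\X)$ gives $|\hat a(\xi)|\to0$, so choosing $M$ with $|\hat a|\le\tfrac12$ there yields integrand $\le2|\hat a|$, integrable thanks to $\hat a\in L^1(\X)$.

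The main obstacle is the behavior near the origin, which is exactly where the geometric sum is singular and where the dimension enters. Everything hinges on the quadratic lower bound $1-\hat a(\xi)\ge c|\xi|^2$: the finite-second-moment hypothesis is what produces the $C^2$ expansion, the symmetry hypothesis is what removes the first-order term and keeps $\hat a$ real (so that $|\hat a|=\hat a$ near $0$), and the nondegeneracy of $\Sigma$—which I would justify from $a$ being a density not concentrated on any hyperplane—is what makes the bound genuinely of order $|\xi|^2$ rather than degenerate. The threshold $d\ge3$ then enters solely through local integrability of $|\xi|^{-2}$. The other two regions are routine once one records the standard fact that an absolutely continuous law satisfies $|\hat a(\xi)|<1$ off the origin, together with Riemann--Lebesgue; the integrability hypothesis on $\hat a$ is needed only to control the tail $\{|\xi|>M\}$.
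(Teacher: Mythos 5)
Your proposal is correct and follows essentially the same route as the paper: pass to Fourier transforms so that $G_0$ corresponds to the geometric series $\hat a/(1-\hat a)$, obtain the quadratic lower bound $1-\hat a(k)\ge C|k|^2$ near the origin from the finite second moment (with $d\ge 3$ giving local integrability of $|k|^{-2}$), and control the region away from the origin using $|\hat a|<1$ there together with the integrability of $\hat a$. Your write-up merely fills in details the paper leaves implicit (the three-region split, the $C^2$ Taylor expansion, and the Riemann--Lebesgue step), so there is nothing substantively different to report.
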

 \begin{proof} Consider the Fourier image of the jump kernel
 $\hat{a}(k)= \int_{\X} e^{-i(k,y)} a(y)\,{\mathrm{d}y}.$
Then
 $\hat{a}(0) =1,\;\; |\hat{a}(k)| < 1, k\neq 0, $ $\hat{a}(k)$ is real-valued and $\hat{a}(k) \to 0, k\to \infty.$
 Furthermore, the Fourier transform of any finite sum $\sum_{k=1}^n a_k(x)$, $n\in \N$ equals
 $$
 \hat{a}(k)\frac{1-\hat{a}^n(k)}{1-\hat{a}(k)}.
 $$ Keeping that in mind, and remembering that $\hat{a}^n$ is bounded, in order to establish that  the Fourier representation for $G_0$ has the form
 $$
G_0 (x)= \frac{1}{(2\pi)^d} \int_{\X} \frac{ \hat{a}(k) e^{i(k,x)}}{1-\hat{a}(k)}\,\mathrm{d}k,
$$
it is sufficient to prove that the function $\frac{ \hat{a}(k) }{1-\hat{a}(k)}$ is integrable. The existence of the 2nd moment of the jump kernel $a$ implies  the non-degeneracy of the covariance matrix, consequently elliptic in the sense that   at zero
 $|1-\hat{a}(k)|\ge C|k|^2$ for some $C>0$. Combined with the boundedness of  $\hat{a}$ it implies integrability of $\frac{ \hat{a}(k) }{1-\hat{a}(k)}$ in some ball around zero. Integrability outside this ball follows from integrability of $\hat{a}$.
\end{proof}

\subsection{Particular Models}

 There are several particular classes of jump kernels
for which Lemma \ref{lemmm} holds, see \cite{KS} for details.

\begin{example}[Gauss kernels]

Assume that the jump kernels has the following form:

\begin{equation*}
 a(x)= C_1\exp\left\{-\frac{ b |x|^2}{2}\right\}.
\end{equation*}
Then $\hat{a}(k)= C_1\exp\left\{-\frac{   |k|^2}{2b}\right\},$ therefore all conditions of Lemma  \ref{lemmm} hold.
 \end{example}

\begin{example}[Exponential tails]
Assume that
\begin{equation}
\label{exp}
a(x)\leq C\exp(-\delta|x|).
\end{equation}
It is proved in \cite{KS} that
for the kernel (\ref{exp}) and $d\geq 3$ it holds that
$$
G_0(x)\leq A\exp(-B|x|)
$$
with certain $A,B>0$.
 \end{example}

\subsection{Markov Processes}
\label{sec:Markov-processes}
Let  $X(t), t\ge 0$ be a Markov process in $\X$ starting from the point $x\in\X$.
A standard way to define a homogeneous  Markov process is to give the probability
$P_t(x,B)$ of the transition from the point $x\in \X$  to the set
$B\subset \X$ in   time $t>0$.  In some cases we have
$$
P_t(x,B)=\int_{B} p_t(x,y)\,{\mathrm{d}y},
$$
where $p_t(x,y)$ is the density of the transition probability. In any case, formally applying Fubini theorem, we obtain
\begin{equation}
\begin{gathered}\label{markov}
E\left[\int_0^\infty f(X(t))\,{\mathrm{d}t}\right]=\int_0^\infty E[f(X(t))]\,{\mathrm{d}t}=\int_0^\infty (T_tf)(x)\,{\mathrm{d}t}=\int_0^\infty \int_{\X}f(y)P_t(x,{\mathrm{d}y})\,{\mathrm{d}t}\\=
\int_{\X}f(y)\mathcal{G}(x, {\mathrm{d}y}),\end{gathered}\end{equation}
  where $\mathcal{G}(x,A)=\int_0^\infty P_t(x,A)\,{\mathrm{d}t}$ is the Green measure of the process $X$, see \cite{KS}. If the density of the transition probability exists, then we can consider the Green function $$g(x,y) =\int_0^\infty p_t(x,y)\,{\mathrm{d}t},$$ and in this case formally

\begin{equation} \label{markov1}
E\left[\int_0^\infty f(X(t))\,{\mathrm{d}t}\right]=\int_{\X}f(y)g(x,y)\,{\mathrm{d}y}.
\end{equation}

\begin{remark}
\begin{enumerate}
\item In Section \ref{sec:Examples} we have considered examples for which the Green function does exists, namely the Brownian motion, fractional Brownian motion and compound Poisson process. Therefore, the
right-hand side of \eqref{markov1} is well defined and the perpetual integral functional $\int_0^\infty f(X(t))\,{\mathrm{d}t}$ exits with probability $1$.
\item In general, we may consider Markov processes with no independent increments (as fBm) with uniformly elliptic generator. For this class of processes it was shown in \cite{Aron} and \cite{Grigor} that the density of transition probability admits two-sided bounds of Gaussian type, therefore for $f\in CL(\X)$, $\int_{\X}f(y)g(x,y)\,{\mathrm{d}y}$  is also well defined and the perpetual integral functional is finite a.s.
\end{enumerate}
\end{remark}

\subsection*{Acknowledgments}

We are thankful to Prof. Georgiy Shevchenko who proposed  how to prove non-constant property in Theorem \ref{theo1}. 
This work has been partially supported by Center for Research in Mathematics
and Applications (CIMA) related with the Statistics, Stochastic Processes
and Applications (SSPA) group, through the grant UIDB/MAT/04674/2020
of FCT-Funda{\c c\~a}o para a Ci{\^e}ncia e a Tecnologia, Portugal.

\end{document}